\theoremstyle{plain}
\newtheorem{thm}{}[section]
\newtheorem{lemma}[thm]{Lemma}
\newtheorem{observation}[thm]{Observation}
\newtheorem{theorem}[thm]{Theorem}
\theoremstyle{remark}
 \newtheorem{remark}[thm]{Remark}
\newcommand{\coloneqq}{\mathrel{\mathop:}\mathrel{\mkern-1.2mu}=}
\newcommand{\struct}[1]{\mathfrak{#1}}
\newcommand{\age}{\ensuremath{\mathrm{age}}\xspace}
\newcommand{\Aut}
{\ensuremath{\mathrm{Aut}}\xspace}
\newcommand{\alg}[1]{\mathfrak{#1}}
\newcommand{\ind}[2]{\mathbf{1}_{#1}^{#2}}
\newcommand{\class}[1]{\mathcal{#1}}
\newcommand{\blueone}{{\color{blue}1}}
\newcommand{\bluecdots}{{\color{blue}\cdots}}
\newcommand{\blueddots}{{\color{blue}\ddots}}
\newcommand{\N}{\mathbb{N}}
\newcommand{\Npos}{{\mathbb{N}^+}}
\newcommand{\proofpg}[1]{\textbf{#1}}
\begin{document}

\title[Answering a question of Barto\v{s}ov\'{a} and Scow]{The random ordered graph is a semi-retract of the canonically ordered atomless Boolean algebra}

\author{Michael Pinsker}
\author{Jakub Rydval}
\author{Moritz Sch\"{o}bi}
\author{Christoph Spiess}

\address{Institut f\"{u}r Diskrete Mathematik und Geometrie, FG Algebra, TU Wien, Austria}

\email{\{michael.pinsker,jakub.rydval,moritz.schoebi,christoph.spiess\}@tuwien.ac.at}

 \begin{abstract}   
 We prove that the random ordered graph is a semi-retract of the canonically ordered atomless Boolean algebra, hereby answering an open question of Barto\v{s}ov\'a and Scow.
\end{abstract}  

\thanks{\emph{Michael Pinsker, Jakub Rydval, and Moritz Sch\"{o}bi}: This research was funded in whole or in part by the Austrian Science Fund (FWF) [I 5948]. For the purpose of Open Access, the authors have applied a CC BY public copyright licence to any Author Accepted Manuscript (AAM) version arising from this submission. \emph{Michael Pinsker and Christoph Spiess}: This research is  funded by the European Union (ERC, POCOCOP, 101071674). Views and opinions expressed are however those of the author(s) only and do not necessarily reflect those of the European Union or the European Research Council Executive Agency. Neither the European Union nor the granting authority can be held responsible for them.}

\keywords{Ramsey; canonical; Boolean algebra; random graph; semi-retraction}

\maketitle
  
\section{Introduction}

 \emph{Boolean algebras} are (universal) algebras with signature $\{\wedge,\vee,\neg,0,1\}$ satisfying axioms formalising basic properties of logical operators (conjunction, disjunction, and negation), see, e.g.,~\cite[Section~2.1.4]{bodirsky2021complexity}. 
Here, $\wedge$ and $\vee$ are function symbols of arity $2$, $\neg$ is of arity $1$ and $0,1$ are constant symbols. 
The class of all finite Boolean algebras fulfills the requirements of Fra\"iss\'e's theorem, and hence there exists a Fra\"iss\'e limit $\alg{B}$ of this class, commonly called the \emph{atomless Boolean algebra}.

Every Boolean algebra $\alg{A}=(A;\wedge,\vee,\neg,0,1)$ induces a partial order $\preceq_{\alg{A}}$ on its domain given by $a_1 \preceq_{\alg{A}} a_2\vcentcolon\Leftrightarrow a_1\vee a_2=a_2$. 
The direct successors of $0$, i.e., the elements $a\in A$ such that $a\wedge b\in\{0,a\}$ for all $b\in A$, are called \emph{atoms}.
An \emph{atomic} Boolean algebra is one where every non-zero element lies above an atom; an \emph{atomless} Boolean algebra contains no atoms at all. 
It is not hard to see that every finite Boolean algebra is atomic.
Given such a finite Boolean algebra $\alg{A}$, specifying an arbitrary linear order on its atoms and extending it anti-lexicographically to the unique decompositions of all elements as joins of a particular subset of its atoms allows us to linearly order the entire algebra in a way that extends $\preceq_{\alg{A}}$~\cite{kechris2005fraisse}.
Finite Boolean algebras ordered in this way are called \emph{naturally ordered.}
Also the class of all finite naturally ordered Boolean algebras fulfills the requirements of Fra\"iss\'e's theorem;
its Fra\"iss\'e-limit $(\alg{B},<)$ is the \emph{canonically ordered atomless Boolean algebra}~\cite{kechris2005fraisse}.\footnote{In~\cite{BARTOŠOVÁ_SCOW_2024} this structure is instead called the \emph{(countable) atomless Boolean algebra with a generic normal order}.}
As the notation suggests, removing the order from $(\alg{B},<)$ gives us back the countable atomless Boolean algebra $\alg{B}$ (up to isomorphism).

The \emph{random ordered graph} $(\struct{G},<)$ is the Fra\"{i}ss\'e limit of the class of all finite simple graphs with an additional linear ordering of their vertices; its reduct $\struct{G}=(G;E)$ is the well-known random graph, the Fra\"{i}ss\'e limit of the class of all finite simple graphs.
It is a folklore fact that both the random ordered graph and the canonically ordered atomless Boolean algebra have the \emph{Ramsey property}~\cite{kechris2005fraisse}. 

\subsection{A question of Barto\v{s}ov\'{a} and Scow.}
A \emph{semi-retraction} between two structures $\struct{C}$ and $\struct{D}$ is a pair of functions $\phi\colon C \rightarrow D$ and $\psi\colon D \rightarrow C$ acting on the quantifier-free types of $\struct{C}$ and $\struct{D}$ whose composition $\psi \circ \phi$ acts as the identity on the quantifier-free types of $\struct{C}$.
Over locally finite ordered structures, semi-retractions have been shown to transfer the Ramsey property~\cite{scow2021ramsey,BARTOŠOVÁ_SCOW_2024}.
They are closely related to the more general category-theoretical notion of \emph{pre-adjunctions}, which are in general known to transfer the Ramsey property~\cite{masulovic_pre-adjunctions_2018}.
Any semi-retraction gives rise to a pre-adjunction in a certain precise sense; in some cases, also the converse is true~\cite[Sections 6.2, 6.3]{BARTOŠOVÁ_SCOW_2024}. 
Regarding this converse, however, some important questions remain open. 
While any semi-retraction between two locally finite structures is known to induce a pre-adjunction between their ages (with embeddings as morphisms), the inverse statement is not \cite[Theorem 6.4, Question 6.6]{BARTOŠOVÁ_SCOW_2024}.

Barto\v{s}ov\'a and Scow~\cite[Theorem~4.1]{BARTOŠOVÁ_SCOW_2024} showed that the random graph is a semi-retract of the atomless Boolean algebra.
They then posed the question whether the same statement holds in the ordered setting which is relevant for the transfer of the Ramsey property~\cite[Question~4.3]{BARTOŠOVÁ_SCOW_2024}.
We answer this question in the affirmative, extending the proof of the unordered version of the statement~\cite[Theorem~4.1]{BARTOŠOVÁ_SCOW_2024}.

\begin{restatable}{theorem}{semiretraction}\label{thm:rg_semiret_aba}
  The random ordered graph is a semi-retract of the canonically ordered atomless Boolean algebra. 
  The functions witnessing this fact can be chosen to be order-preserving.
\end{restatable}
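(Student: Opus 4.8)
The plan is to follow the blueprint of Barto\v{s}ov\'a and Scow's unordered result \cite[Theorem~4.1]{BARTOŠOVÁ_SCOW_2024} and add bookkeeping to make every map respect the orders. Recall that a semi-retraction is given by maps on quantifier-free types, so it suffices to produce embeddings between arbitrary finite substructures that are compatible in the required way, or equivalently to work concretely: I would describe $(\alg B,<)$ via its standard representation on (finite unions of) intervals of $(\mathbb{Q};<)$, so that atoms correspond to basic intervals and the anti-lexicographic order on joins of atoms has a transparent description. For the direction $\phi\colon (\struct G,<)\to(\alg B,<)$, I would fix an enumeration $g_0<g_1<\cdots$ of the vertices of the random ordered graph compatible with its order (using that $(\struct G,<)$ is the Fra\"iss\'e limit, and that its order type can be taken to be that of $\mathbb{Q}$), and send each $g_i$ to a carefully chosen element $b_i\in\alg B$ so that (i) the assignment is order-preserving, and (ii) from the Boolean-algebra quantifier-free type of $\{b_{i_1},\dots,b_{i_n}\}$ one can recover both the graph relation $E$ and the order $<$ on $\{g_{i_1},\dots,g_{i_n}\}$. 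The classical trick is to encode the edge relation between $g_i$ and $g_j$ (say $i<j$) into whether $b_i\wedge b_j$ is "large'' or "small'' relative to a scheme that depends only on $i,j$; the extra ingredient here is to simultaneously encode the order, which comes essentially for free once $\phi$ is built to be monotone.

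The second map $\psi\colon(\alg B,<)\to(\struct G,<)$ is where the real content lies. Here I would again enumerate a suitable countable dense-without-endpoints suborder of $(\alg B,<)$ and map it into $(\struct G,<)$ using the extension property (homogeneity/universality) of the random ordered graph: process the elements $b_0,b_1,\dots$ of $\alg B$ in order, and at each step choose the image $\psi(b_k)$ to realise over $\{\psi(b_0),\dots,\psi(b_{k-1})\}$ the unique one-point extension dictated by the quantifier-free Boolean-algebra type of $\{b_0,\dots,b_k\}$ — including a choice of order position consistent with $<$ on the $b_j$'s. The point of a semi-retraction (as opposed to a retraction) is that $\psi$ need not be a homomorphism in either direction on the nose: $\psi\circ\phi$ only has to preserve quantifier-free types of $(\struct G,<)$. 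So the design constraint is the single compatibility equation: for every finite tuple $\bar g$ from $\struct G$, the quantifier-free type of $\psi(\phi(\bar g))$ in $(\struct G,<)$ equals that of $\bar g$. I would arrange this by making the encoding in $\phi$ and the decoding in $\psi$ literally inverse to each other on the relevant finite pieces, exactly as in the unordered proof, and then observe that the order constraint is automatically satisfied because both $\phi$ and the decoding step of $\psi$ were chosen order-preserving, so the composite is order-preserving and, being a bijection onto its image with the correct edge relation, is an embedding.

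The key steps, in order: (1) fix concrete presentations of $(\alg B,<)$ and $(\struct G,<)$ and countable enumerations compatible with the orders; (2) define $\phi$ on the enumeration of $(\struct G,<)$, encoding edge-or-non-edge into a Boolean-algebra configuration that also records relative order, and verify $\phi$ is order-preserving and that the Boolean type of any finite image determines the graph-plus-order type of the preimage; (3) define $\psi$ on the enumeration of $(\alg B,<)$ by a back-and-forth-style stepwise construction using the extension property of $(\struct G,<)$, choosing at each step both the neighbourhood and the order position prescribed by the Boolean type, and check $\psi$ is order-preserving; (4) verify the composition law $\psi\circ\phi=\mathrm{id}$ on quantifier-free types of $(\struct G,<)$, which by construction reduces to checking that the decoding inverts the encoding on every finite tuple; (5) conclude, noting the order-preservation of both maps gives the second sentence of the theorem for free.

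The main obstacle I expect is step (2): designing the single finite gadget in $\alg B$ that simultaneously and faithfully encodes \emph{both} the edge relation and the linear order of an arbitrary finite ordered graph, in a way that is robust under the homogeneity of $(\alg B,<)$ (so that the same gadget can always be realised no matter which finite configuration has been placed so far) \emph{and} such that distinct ordered graphs get quantifier-free-inequivalent images. In the unordered case of \cite[Theorem~4.1]{BARTOŠOVÁ_SCOW_2024} one only needs to separate edges from non-edges; here the gadget must carry strictly more information while still fitting inside the type structure of the atomless Boolean algebra, and one must check that the natural anti-lexicographic order does not accidentally collapse or permute the intended order information. Once that gadget is pinned down, the back-and-forth construction of $\psi$ and the verification of the composition identity should be routine adaptations of the unordered argument.
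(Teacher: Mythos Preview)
Your outline has the difficulty backwards, and contains one impossible step. First, you cannot ``fix an enumeration $g_0<g_1<\cdots$ of the vertices of the random ordered graph compatible with its order'': the order type of $(\struct G,<)$ is that of $\mathbb{Q}$, which admits no monotone enumeration by $\mathbb{N}$. The paper instead takes an \emph{arbitrary} enumeration $(v_n)_{n\in\Npos}$ of $G$ and lets the linear order on $G$ enter only later, when ordering the atoms of the image subalgebras. Second, the map $\psi\colon(\alg B,<)\to(\struct G,<)$ is not ``where the real content lies'' and needs no back-and-forth construction: one simply puts an edge relation $E$ on $B$ by declaring $a,b$ adjacent iff $a\wedge b\neq 0$, observes that $E$ is definable in $(\alg B,<)$ so the passage to $(B;E,<)$ respects quantifier-free types, and then embeds this countable ordered graph into $(\struct G,<)$ by universality. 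Your stepwise recipe for $\psi$ does not obviously produce a qf-type-respecting map, since the Boolean qf-type of a tuple is much finer than any ordered-graph type and you give no argument that your choices depend only on the former.

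The genuine obstacle, which your sketch does not isolate, is not ``designing the gadget'' --- the encoding of edges via meets is literally the same as in the unordered proof --- but rather equipping the image with a \emph{natural} linear order that is coherent across all finitely generated subalgebras. Concretely, the paper maps $v_n$ to an element $u_n$ of an auxiliary Boolean algebra $\alg U$ (infinite $0/1$ upper-triangular matrices), reads off the atoms of each $\langle u_{i_1},\dots,u_{i_k}\rangle$ explicitly, and then \emph{defines} a linear order on those atoms using the $<$-order of the corresponding $v_{i_\ell}$'s (not the enumeration order). The crucial verification is that when $\{u_{j_1},\dots,u_{j_\ell}\}\subseteq\{u_{i_1},\dots,u_{i_k}\}$, the induced natural orders agree (via Lemma~\ref{lemma:order_restriction}), so the union $\bigcup_k(\alg A_k,<)$ is a well-defined ordered Boolean algebra that embeds into $(\alg B,<)$. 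This nesting-compatibility check is the new content over the unordered case; your plan to make $\phi$ order-preserving by choosing a monotone enumeration sidesteps exactly this issue, but at the cost of being impossible to carry out.
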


\section{Preliminaries}
\subsection{Structures.}
A \emph{signature} is a set $\tau$ containing function and relation symbols, each with an associated natural number called \emph{arity}.
We denote the arity of a function symbol $f$ by $ar(f)$, and do the same for relation symbols.
A $\tau$\emph{-structure} $\struct{C}$ is a set $C$, called \emph{domain}, together with \emph{operations} $f^\struct{C}\colon C^{ar(f)}\rightarrow C$ and \emph{relations} $R^\struct{C}\subseteq C^{ar(R)}$, corresponding to the symbols in the signature.
The signature $\tau$ is called \emph{relational} if it only contains relational symbols, and \emph{functional} if it purely consists of function symbols. 
In parallel, a $\tau$-structure is called a \emph{relational structure} or an \emph{algebra.}
A subset $D\subseteq C$ induces a \emph{substructure} of a $\tau$-structure $\struct{C}$ if it is closed under all functions $f^\struct{C}, f\in\tau$.
This substructure, which we denote by $\struct{D}$, then has functions $f^\struct{D}\coloneqq f^\struct{C}|_{D^{ar(f)}}$ and relations $R^\struct{D}\coloneqq R^\struct{C}\cap D^{ar(R)}$.
We write $\struct{D}\leq\struct{C}$ to indicate that $\struct{D}$ is a substructure of $\struct{C}$.
For any subset $D\subseteq C$, the substructure \emph{generated by} $D$ is the smallest substructure of $\struct{C}$ containing $D$; we denote it by $\langle D\rangle$.
If a substructure is generated by a finite set, we call it \emph{finitely generated}.
Write $\Npos\coloneqq \N\setminus\{0\}$. 
The \emph{union} $\bigcup_{n\in\Npos} \struct{C}_n$ of a family $(\struct{C}_n)_{n\in\Npos}$ of $\tau$-structures with the property that $\struct{C}_n$ is a substructure of $\struct{C}_{n+1}$ for every $n\in\Npos$ is the $\tau$-structure with domain $\bigcup_{n\in\Npos} C_n $, relations $R^{\cup_{n\in\Npos} \struct{C}_n}\coloneqq \bigcup_{n\in\Npos} R^{\struct{C}_n}$ and operations $f^{\cup_{n\in\Npos} \struct{C}_n}\coloneqq \bigcup_{n\in\Npos} f^{\struct{C}_n}$.
An \emph{expansion} of a $\tau$-structure $\struct{C}$ is a $\sigma$-structure $\struct{D}$ with the same domain $C$ such that $\tau\subseteq\sigma$ and $f^\struct{C}=f^\struct{D}$ as well as $R^\struct{C}=R^\struct{D}$ for all function symbols $f$ and relation symbols $R$ in $\tau$.
A \emph{restriction} of a $\tau$-structure $\struct{C}$ is a $\sigma$-structure $\struct{D}$ such that $\sigma\subseteq\tau$ and $\struct{C}$ is an expansion of $\struct{D}$.

Given two structures $\struct{C}$ and $\struct{D}$ of the same signature $\tau$, a \emph{homomorphism} is a function $g\colon C\rightarrow D$ preserving all operations and relations. 
That is, given a function symbol $f\in\tau$ and $c_1,\dots,c_{ar(f)}\in C$, we require that $g(f^\struct{C}(c_1,\dots,c_{ar(f)})=f^\struct{D}(g(c_1),\dots,g(c_{ar(f)}))$, and similarly, given a relation symbol $R\in\tau$ and $c_1,\dots,c_{ar(R)}\in C$, $(c_1,\dots,c_{ar(R)})\in R^\struct{C}\Rightarrow(g(c_1),\dots,g(c_{ar(f)}))\in R^\struct{D}$.
An \emph{embedding} is an injective homomorphism that also preserves the complements of relations, meaning that also $(c_1,\dots,c_{ar(R)})\in R^\struct{C}\Leftarrow(g(c_1),\dots,g(c_{ar(f)}))\in R^\struct{D}$.
If there is an embedding from $\struct{C}$ to $\struct{D}$, we say that $\struct{C}$ \emph{embeds} into $\struct{D}$, and write $\struct{C}\hookrightarrow\struct{D}$.
An \emph{isomorphism} is a surjective embedding, and an \emph{automorphism} is an isomorphism from a structure $\struct{C}$ to itself.

The automorphisms of a structure $\struct{C}$ are denoted by $\Aut(\struct{C})$.
For $k\in\Npos$ and $\bar{c},\bar{d}\in C^k$, we say that $\bar{d}$ is \emph{in the orbit of} $\bar{c}$ (under $\Aut(\struct{C))}$ if there is $\gamma\in\Aut(\struct{C})$ such that $\gamma(\bar{c})=\bar{d}$.
Clearly, this relation is symmetric, and even defines an equivalence relation on $C^k$. 
The equivalence classes of this relation are called the $\emph{orbits}$ (of $k$-tuples).
The structure $\struct{C}$ is called \emph{$\omega$-categorical} if for every $k\in\Npos$, there are only finitely many distinct orbits of $k$-tuples. 

\subsection{Logic.}\label{subs:logic}
We call a $\tau$-structure $\struct{C}$ \emph{homogeneous} if, for all finitely generated substructures $\struct{D}$ and $\struct{E}$ of $\struct{C}$ and every isomorphism $g\colon\struct{D}\rightarrow\struct{E}$, there is $\gamma\in\Aut(\struct{C})$ extending $g$.
It is not hard to see that if both $C$ and $\tau$ are of countable size, $\struct{C}$ being homogeneous implies that it is also $\omega$-categorical.

The \emph{type} of a $k$-tuple $\bar{c}$ over $C$ is the set of all $\tau$-formulas $\phi$ with free variables $x_1,\dots,x_k$ such that $\struct{C}\models\phi(\bar{c})$, and the \emph{quantifier-free type} of $\bar{c}$ is the set of all such formulas without quantifiers. 
If two tuples $(c_1,\dots,c_n)$ and $(d_1,\dots,d_n)$ are of the same quantifier-free type, the mapping $c_i\mapsto d_i$ extends to to an isomorphism $g$ of the substructures $\langle\{c_1,\dots,c_n\}\rangle$ and $\langle\{d_1,\dots,d_n\}\rangle$ (by exchanging every occurence of $c_i$ in any term by $d_i$). 
The extension is well-defined because if $s(c_1,\dots,c_n)=t(c_1,\dots,c_n)$ for any two $\tau$-terms $s$ and $t$, $s(x_1,\dots,x_n)=t(x_1,\dots,x_n)$ lies in the quantifier-free type of $(c_1,\dots,c_n)$, and therefore also that of $(d_1,\dots,d_n)$.
With a very similar reasoning, one can see that $g$ is bijective and both $g$ and its inverse $g^{-1}$ are homomorphisms, which implies that $g$ is an isomorphism.
The converse also holds: if there is an isomorphism $g\colon\langle\{c_1,\dots,c_n\}\rangle\rightarrow\langle\{d_1,\dots,d_n\}\rangle$ extending $c_i\mapsto d_i$, the two tuples are of the same quantifier-free type.
If two tuples are in the same orbit, they are of the same type. 
As one can easily see from the observations regarding quantifier-free types and isomorphisms we just made, in homogeneous structures, also the converse is true.

 Homogeneous structures arise as limit objects of certain well-behaved classes of finitely generated structures.
A class $\class{C}$ of finitely generated structures in a common signature $\tau$ is said to have the:
\begin{itemize}
    \item \emph{hereditary property (HP)} if, for all $\struct{C}\in\class{C}$ and all $\tau$-structures $\struct{D}\hookrightarrow\struct{C}$, also $\struct{D}\in\class{C}$;
    \item \emph{joint embedding property (JEP)} if, for all $\struct{C,D}\in\class{C}$, there is $\struct{E}\in\class{C}$ with $\struct{C},\struct{D}\hookrightarrow\struct{E}$;
    \item \emph{amalgamation property (AP)} if, for all $\struct{C},\struct{D}_1,\struct{D}_2\in\class{C}$ for which there exist embeddings $f_1\colon \struct{C} \hookrightarrow\struct{D}_1$ and $f_2\colon \struct{C}\hookrightarrow \struct{D}_2$, there exists $\struct{E}\in\class{C}$ and embeddings $g_1\colon \struct{D}_1  \hookrightarrow \struct{E}$ and $g_2\colon \struct{D}_2  \hookrightarrow \struct{E}$ such that $g_1\circ f_1 = g_2\circ f_2$.
\end{itemize}
The \emph{age} of a structure $\struct{C}$ consists of all structures isomorphic to a finitely generated substructure of $\struct{C}$. Every class $\class{C}$ of the form $\age(\struct{C})$ for a countable homogeneous structure $\struct{C}$ over a countable signature is (i) closed under isomorphisms, (ii) contains only countably many structures up to isomorphism, and (iii) satisfies the HP, JEP and AP.
Fra\"iss\'e's theorem states the converse:

\begin{theorem}[Fra\"iss\'e's theorem, Theorem 6.1.2 in~\cite{hodges_book}]
    Let $\class{C}$ be a class of finitely generated structures over a countable signature $\tau$ that is  
    \begin{enumerate}[label=\textnormal{(\roman*)}]
        \item closed under isomorphisms,
        \item contains only countably many structures up to isomorphism, and 
        \item satisfies the HP, JEP and AP.  
    \end{enumerate}
    Then there is an (up to isomorphism) unique countable homogeneous $\tau$-structure $\struct{C}$ with $\age(\struct{C})=\class{C}$, called the \emph{Fra\"iss\'e-limit} of $\class{C}$. 
\end{theorem}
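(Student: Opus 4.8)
The plan is to prove existence and uniqueness separately, driving both with a single combinatorial engine: the \emph{extension property}, which says that for all $\struct{A}\leq\struct{B}$ in $\class{C}$ and every embedding $f\colon\struct{A}\hookrightarrow\struct{C}$ there is an embedding $g\colon\struct{B}\hookrightarrow\struct{C}$ extending $f$. First I would prove that, for a countable structure $\struct{C}$ with $\age(\struct{C})=\class{C}$, the extension property is equivalent to homogeneity. That homogeneity implies the extension property is immediate: since $\struct{B}\in\age(\struct{C})$ there is some embedding $h\colon\struct{B}\hookrightarrow\struct{C}$, the images $h(\struct{A})$ and $f(\struct{A})$ are isomorphic finitely generated substructures, and an automorphism extending that isomorphism, composed with $h$, yields the required $g$. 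The converse is a back-and-forth argument: given an isomorphism between finitely generated substructures, one repeatedly applies the extension property to absorb the next element of an enumeration of $C$ into the domain and into the range, and the union of the resulting chain of partial isomorphisms is an automorphism.

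For existence I would realise $\struct{C}$ as a union $\bigcup_{n\in\Npos}\struct{C}_n$ of an increasing chain of finitely generated members of $\class{C}$. Since $\tau$ is countable and $\class{C}$ has only countably many isomorphism types, the collection of all \emph{tasks} of the form ``a finitely generated $\struct{A}\leq\struct{C}_n$ equipped with an embedding $\struct{A}\hookrightarrow\struct{B}$ for some $\struct{B}\in\class{C}$'' can be enumerated, and I would interleave these with an enumeration of the isomorphism types of $\class{C}$ via a bookkeeping function. At a type-step I use JEP to obtain $\struct{C}_{n+1}\in\class{C}$ embedding both $\struct{C}_n$ and the scheduled type; this forces $\class{C}\subseteq\age(\struct{C})$. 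At a task-step I apply AP to the span $\struct{C}_n\hookleftarrow\struct{A}\hookrightarrow\struct{B}$ to obtain $\struct{C}_{n+1}\in\class{C}$ realising the extension. Every $\struct{C}_n$ is finitely generated and hence in $\class{C}$, so by HP each finitely generated substructure of the union lies in $\class{C}$, giving $\age(\struct{C})\subseteq\class{C}$; together with the type-steps this yields $\age(\struct{C})=\class{C}$. The task-steps guarantee the extension property, so $\struct{C}$ is homogeneous by the equivalence above, and it is countable as a countable union of finitely generated structures over a countable signature.

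For uniqueness, suppose $\struct{C}$ and $\struct{C}'$ are both countable homogeneous with $\age(\struct{C})=\age(\struct{C}')=\class{C}$. By the equivalence both enjoy the extension property. Fixing enumerations of $C$ and $C'$, I would run a two-sided back-and-forth building an increasing chain of isomorphisms $g_k\colon\struct{D}_k\to\struct{D}_k'$ between finitely generated substructures: at odd steps the extension property for $\struct{C}'$ extends $g_k$ so that the next element of $C$ enters the domain, and at even steps the extension property for $\struct{C}$ (applied to $g_k^{-1}$) brings the next element of $C'$ into the range. The union $\bigcup_k g_k$ is then an isomorphism $\struct{C}\to\struct{C}'$.

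The main obstacle is the orchestration of the extension property as the bridge between the purely combinatorial hypotheses (HP, JEP, AP) and the model-theoretic conclusion of homogeneity, together with making the bookkeeping in the existence proof genuinely exhaustive, so that every extension task arising over every $\struct{C}_n$ is eventually scheduled and no required extension is missed. A secondary point, specific to this functional or mixed signature setting rather than the purely relational one, is that finitely generated substructures may be infinite: each back-and-forth step must therefore adjoin the whole substructure $\langle D_k\cup\{c\}\rangle$ generated by the next element $c$, and one must check that this is still finitely generated, hence in $\class{C}$, so that the extension property applies.
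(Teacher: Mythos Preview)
The paper does not give its own proof of this statement: Fra\"iss\'e's theorem is quoted as a known preliminary result with a reference to Hodges~\cite{hodges_book}, and is used only as a tool to introduce the Fra\"iss\'e limits appearing later. There is therefore nothing in the paper to compare your proposal against.

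That said, your sketch is essentially the classical argument one finds in Hodges: reduce homogeneity to the extension property, build the limit as an increasing union via AP and JEP with suitable bookkeeping, and prove uniqueness by back-and-forth. Your remark that in a functional signature finitely generated substructures need not be finite, so that at each back-and-forth step one must adjoin the substructure generated by the new element rather than just the element itself, is exactly the care needed to make the argument go through in the generality stated here.
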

Let $\struct{C}$ and $\struct{D}$ be two structures and let $\phi \colon C\rightarrow D$ be arbitrary.
We say that $\phi$ \emph{acts on} (or \emph{respects}) the quantifier-free types of $\struct{C}$ and $\struct{D}$ if for each two tuples $\bar{c}_1,\bar{c}_2$ over $C$ with the same quantifier-free type, the tuples $f(\bar{c}_1),f(\bar{c}_2)$ obtained via the component-wise action of $f$ also have the same quantifier-free type.\footnote{In $\omega$-categorical homogeneous structures, the notion of a quantifier-free type-respecting function coincides with the notion of a \emph{canonical function}.}
Given two structures $\struct{C}$ and $\struct{D}$, we say that $\struct{C}$ is a \emph{semi-retract} of $\struct{D}$ if there are quantifier-free type-respecting functions $\phi\colon C\rightarrow D$ and $\psi\colon D\rightarrow C$ such that $\psi\circ\phi$ acts as the identity on the quantifier-free types of $\struct{C}$.\footnote{Barto\v{s}ov\'a and Scow \cite{BARTOŠOVÁ_SCOW_2024} instead say that $\psi\circ\phi$ is \emph{quantifier-free type-preserving}.}
The pair $(\phi,\psi)$ is then called a \emph{semi-retraction} between $\struct{C}$ and $\struct{D}$.

\section{Main result}\label{sec:answer}

In the present section, we prove our main result Theorem~\ref{thm:rg_semiret_aba}, restated below.
\semiretraction* 

The two structures in question are typically constructed using Fra\"iss\'e's theorem; certain details of this construction were given in~\cite{kechris2005fraisse}; they will be useful in the proof of Theorem~\ref{thm:rg_semiret_aba}.
We will consider various powers of $\{0,1\}$. By interpreting $\wedge$ and $\vee$ as the component-wise infimum and supremum, respectively, $\neg$ as the operation switching all zeroes and ones, and the constant symbols $0$ and $1$ as the corresponding constant tuples, all powers of $\{0,1\}$ can be endowed with the structure of Boolean algebras. 
Note that, for every $I$, the elements of $\{0,1\}^I$ can be viewed as an indicator function $\ind{J}{I}\colon I \rightarrow \{0,1\}$, mapping $i$ to $1$ if and only if $i\in J$.
If $J$ consists of a single element $j$, we simply write $\ind{j}{I}$.
The following is obvious from the definition of the natural order on finite Boolean algebras \cite{kechris2005fraisse}.
\begin{lemma}\label{lemma:order_restriction}
    For a finite naturally ordered Boolean algebra $(\alg{D}_1,<_1)$ and a subalgebra $\alg{D}_2\leq\alg{D}_1$, the natural order of $\alg{D}_2$ induced by the restriction of $<_1$ to the atoms of $\alg{D}_2$ is the same as the restriction of $<_1$ to $\alg{D}_2$.
\end{lemma}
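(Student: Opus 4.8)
The plan is to write out the natural (anti-lexicographic) order explicitly in both $\alg{D}_1$ and $\alg{D}_2$ and then compare the two resulting comparison rules directly on the elements of $D_2$. First I would fix notation: let $q_1<_1\cdots<_1 q_m$ be the atoms of $\alg{D}_1$ in the order underlying $<_1$, and identify each $x\in D_1$ with the set $S_x$ of atoms of $\alg{D}_1$ below it, so that $x=\bigvee S_x$ and, by definition of the natural order, for $x\neq y$ one has $x<_1 y$ iff the $<_1$-largest element of $S_x\triangle S_y$ lies in $S_y$. Since $\alg{D}_1$ is finite, hence atomic, every atom $r$ of $\alg{D}_2$ is a join $\bigvee B_r$ of a non-empty set $B_r$ of atoms of $\alg{D}_1$, and as every atom of $\alg{D}_1$ lies below exactly one atom of $\alg{D}_2$, the blocks $B_r$ partition $\{q_1,\dots,q_m\}$. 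As $\alg{D}_2$ is finite as well, every $x\in D_2$ is the join of the set $T_x$ of atoms of $\alg{D}_2$ below it; since the blocks are pairwise disjoint, $S_x=\bigcup_{r\in T_x}B_r$ is a union of whole blocks, and hence $S_x\triangle S_y=\bigcup_{r\in T_x\triangle T_y}B_r$ for all $x,y\in D_2$. Denote by $<_2$ the natural order of $\alg{D}_2$ induced by the restriction of $<_1$ to the atoms of $\alg{D}_2$, i.e., the order we wish to identify with the restriction of $<_1$ to $D_2$.

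The next step is to read off $<_1$ on the atoms of $\alg{D}_2$. For distinct atoms $r,r'$ of $\alg{D}_2$ the sets $S_r=B_r$ and $S_{r'}=B_{r'}$ are disjoint, so $S_r\triangle S_{r'}=B_r\cup B_{r'}$, whose $<_1$-largest element is the $<_1$-larger of $\max_{<_1}B_r$ and $\max_{<_1}B_{r'}$; thus $r<_1 r'$ iff $\max_{<_1}B_r<_1\max_{<_1}B_{r'}$. In other words, $<_1$ orders the atoms of $\alg{D}_2$ by the $<_1$-maxima of their blocks, and $<_2$ is by definition the anti-lexicographic extension of exactly this linear order.

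Finally I would assemble the pieces. Fix $x,y\in D_2$ with $x\neq y$, so $T_x\neq T_y$, and let $r^{\ast}$ be the $<_1$-largest atom of $\alg{D}_2$ lying in $T_x\triangle T_y$. Since the atoms of $\alg{D}_2$ are $<_1$-ordered by their block maxima, the element $\max_{<_1}B_{r^{\ast}}$ is the $<_1$-largest atom of $\alg{D}_1$ occurring in $\bigcup_{r\in T_x\triangle T_y}B_r=S_x\triangle S_y$, and $\max_{<_1}B_{r^{\ast}}\in S_y$ iff $r^{\ast}\in T_y$ (because $S_y$ is a union of whole blocks). Therefore
\[
x<_1 y \;\iff\; \max\nolimits_{<_1}(S_x\triangle S_y)\in S_y \;\iff\; r^{\ast}\in T_y \;\iff\; \max\nolimits_{<_1}(T_x\triangle T_y)\in T_y \;\iff\; x<_2 y,
\]
where the two outermost equivalences are the definitions of $<_1$ and of $<_2$ and the two innermost ones are the observations just made. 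Since $<_1$ and $<_2$ are both linear orders, they coincide on $D_2$, which is the assertion. The whole argument is bookkeeping; the only step that needs genuine care — and the one I would regard as the main obstacle — is matching up the two anti-lexicographic comparisons, namely checking that $<_1$ sorts the atoms of $\alg{D}_2$ by their block maxima and, consequently, that the $<_1$-largest $\alg{D}_1$-atom inside a symmetric difference $S_x\triangle S_y$ (with $x,y\in D_2$) is precisely $\max_{<_1}B_{r^{\ast}}$ for $r^{\ast}$ the $<_1$-largest $\alg{D}_2$-atom in $T_x\triangle T_y$.
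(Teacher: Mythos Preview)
Your argument is correct: the key observations --- that the atoms of $\alg{D}_2$ partition the atoms of $\alg{D}_1$ into blocks, that $<_1$ orders the $\alg{D}_2$-atoms by their block maxima, and that therefore the anti-lexicographic comparisons in $\alg{D}_1$ and $\alg{D}_2$ agree on $D_2$ --- are exactly what is needed, and your chain of equivalences is sound. The paper does not actually supply a proof of this lemma; it merely declares the statement ``obvious from the definition of the natural order on finite Boolean algebras,'' so your write-up is a faithful unpacking of precisely the verification the authors are gesturing at.
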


    Our proof of Theorem~\ref{thm:rg_semiret_aba} follows the proof of the aforementioned unordered statement.
To accommodate the added linear order, however, we need to take a diversion via another countably large but atomic Boolean algebra, whose finitely generated subalgebras can be ordered according to our needs.

\begin{proof}[Proof of Theorem~\ref{thm:rg_semiret_aba}]
    We construct order-preserving and quantifier-free type-respecting functions  $\phi\colon  (\struct{G};<) \rightarrow (\alg{B};<)$ and $\psi\colon  (\alg{B};<) \rightarrow (\struct{G};<)$.
    For the latter, we can proceed analogously to the construction in the unordered case~\cite{BARTOŠOVÁ_SCOW_2024}.

    \proofpg{Mapping the Boolean algebra to the random graph.}
    Expand $(\alg{B},<)$ by the binary (edge) relation $E$ defined as follows:
    $a,b\in B$ share an edge if and only if they are distinct and $a \wedge b \neq 0$.
    Note that the $\{<,E\}$-reduct $(B;<,E)$ of $(\alg{B},<,E)$ is a linearly ordered countable simple graph.
    Since the random ordered graph embeds every linearly ordered countable simple graph, there exists an embedding $\psi\colon (B;<,E) \hookrightarrow (\struct{G},<)$.
    By definition, this function is order-preserving and quantifier-free type-respecting from $(B;E,<)$ to $(\struct{G};<)$. 
    As $E$ is first-order definable in $(\alg{B},<)$ and $(\alg{B},<)$ is homogeneous, also $\psi\colon (\alg{B},<)\rightarrow(\struct{G},<)$ is order-preserving and quantifier-free type-respecting.
    
    \proofpg{The Boolean algebra of upper block triangular matrices.}
    The construction of the second order-preserving function $\phi\colon (G;<) \rightarrow (B;<) $, which is quantifier-free type-respecting from $(\struct{G},<)$ to $(\alg{B},<)$, is significantly more involved. 
    The basic idea stems from~\cite{BARTOŠOVÁ_SCOW_2024}, but to incorporate the additional order, we need to make the construction more explicit.
    Pick an arbitrary enumeration $(v_n)_{n\in\Npos}$ of $G$, and define $I\coloneqq \{(n,m)\in\Npos\times\Npos\mid n\leq m\}$. 
    Consider the natural Boolean algebra $\alg{U}$ induced by $\{0,1\}^I$; it will be helpful to think of its elements as $\Npos\times \Npos$-upper block triangular matrices.
    We map $(\struct{G},<)$ to $\alg{U}$ via $\theta\colon v_n\mapsto u_n$, where \[ u_n\coloneqq \ind{\{(n,n+k) \,\mid \, k\in\N\}}{I}\vee\ind{\{(m,n)\, \mid \, m<n \text{ and } (v_m,v_n) \in E^{\struct{G}}\}}{I}.\]
    Denote the subalgebra of $\alg{U}$ generated by $\theta(G)$ by $\alg{A}$.
    An exemplary configuration of $\{v_1,\dots,v_6\}$ and their images under $\theta$ is given in Figures $\ref{fig:six_vertices}$, \ref{fig:generators_and_atoms_1} and \ref{fig:generators_and_atoms_2}.
    
    \proofpg{The atoms of finitely generated subalgebras.}
    In $\alg{U}$, the elements $b_n\coloneqq \ind{\{(n,n+k)\mid k\in\N\}}{I},n\in\Npos$, form an infinite antichain, i.e., the meet of any two distinct elements is $0$.
    Moreover, below each $b_n$, the elements $b_n^i\coloneqq\ind{\{(n,i)\}}{I},i>n$, form another antichain.
    Using this notation, we see that $\theta$ sends each $v_n$ to 
    \[
    u_n= b_n\vee\bigvee\nolimits_{i<n \text{ and } (v_i,v_n)\in E^{\struct{G}}}b_i^n.
    \]
    This shows that the properties (1)--(4) identified in the proof of~\cite[Theorem~4.1]{BARTOŠOVÁ_SCOW_2024} also apply to the elements $u_n$ of $\alg{A}\leq\alg{U}$.
    For the convenience of the reader, we list them here. 
    \begin{enumerate}
        \item\label{it:proof:property1} Whenever $i,j$ and $k$ are pairwise distinct, we have that $u_i\wedge u_j\wedge u_k=0$. 
        \item Whenever $i<j\in\Npos$, we have that $u_i\wedge u_j$ is equal to $b_i^j=\ind{\{(i,j)\}}{I}$ if $(v_i,v_j)\in E^\struct{G}$, and equal to $0$ otherwise.
        \item For distinct $i$ and $j$, we have that
        \begin{equation*}
        u_i\wedge\neg u_j=\left\{
            \begin{array}{ll}
             u_i,&\text{if } (u_i,u_j)\notin E^\struct{G}, \\
             u_i\wedge\neg b_i^j,& \text{if } (u_i,u_j)\in E^\struct{G}\text{ and } i<j,\\
             u_i\wedge\neg b_j^i,& \text{if } (u_i,u_j)\in E^\struct{G}\text{ and } i>j.
        \end{array}\right. 
        \end{equation*}
        \item\label{it:proof:property4} For all $i$ and $j$, we have that $\neg u_i\wedge\neg u_j=\neg(u_i\vee u_j)$.
    \end{enumerate}
    These properties allow us to list the atoms of any subalgebra of $\alg{U}$ generated by finitely many $u_i$.
    The atoms of any finitely generated Boolean algebra are exactly the complete meet-expressions based on its generators not equal to zero.
    Consider a subalgebra generated by pairwise distinct elements $u_{i_1},\dots,u_{i_k}$.
    Making essentially the same observations Barto\v{s}ov\'{a} and Scow do in their proof of ~\cite[Theorem~4.1]{BARTOŠOVÁ_SCOW_2024} based off of the Properties (\ref{it:proof:property1})-(\ref{it:proof:property4}), its atoms (seen as matrices) are
    \begin{enumerate}[label=\textnormal{(\roman*)}]
        \item\label{it:proof:atom1} $\neg(\bigvee_{1\leq \ell\leq k} u_{i_\ell})$, the matrix that is constant zero in the lines $i_1,\dots,i_k$ and all entries with indices $(n,i_\ell), 1\leq\ell\leq k, n<i_\ell$ such that $(v_n,v_{i_\ell})\in E^{\struct{G}}$, and constant one otherwise,
        \item\label{it:proof:atom2} $u_{i_n}\wedge\neg(\bigvee_{1\leq \ell\leq k, \ell\neq n} u_{i_\ell}\wedge u_{i_n})\eqqcolon\Tilde{u}_{i_n}$, the matrix $u_{i_n}$ but with all entries with indices $(i_\ell,i_n), i_\ell<i_n$ and $(i_n,i_\ell), i_n<i_\ell$ such that $(v_{i_n},v_{i_\ell})\in E^\struct{G}$ set to zero, and
        \item\label{it:proof:atom3} $u_{i_n}\wedge u_{i_m}=b_{i_n}^{i_m}$, for $1\leq n,m\leq k$ such that $i_n<i_m$ and $(v_{i_n},v_{i_m})\in E^{\struct{G}}$.
    \end{enumerate}
    Exemplary atoms of the subalgebra generated by the images of the vertices $u_1,u_2$ and $u_4$ in Figure~\ref{fig:six_vertices} are shown in Figure~\ref{fig:generators_and_atoms_3}.
    In particular, this list tells us that given two tuples $\bar{v}=(v_{i_1},\dots,v_{i_n})$ and $\bar{v}'=(v_{j_1},\dots,v_{j_n)}$ of the same quantifier-free type in $\struct{G}$ (in particular, the same edges), the mapping $u_{i_\ell}\mapsto u_{j_\ell}$ canonically extends to a bijection between the atoms of the subalgebras $\langle\{u_{i_1},\dots,u_{i_n}\}\rangle$ and $\langle\{u_{j_1},\dots,u_{j_n}\}\rangle$, and therefore an isomorphism between the subalgebras. 
    By our observations on isomorphisms and quantifier-free types in Section~\ref{subs:logic}, we have: 
    \begin{observation}\label{observation:qftp-respecting}
        The mapping $\theta\colon (\struct{G},<)\rightarrow \alg{U}$ is quantifier-free type-respecting.
    \end{observation}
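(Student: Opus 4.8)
The plan is to unwind the definition of a quantifier-free type-respecting map and feed it the structural data already assembled above. So I take two tuples $\bar v=(v_{i_1},\dots,v_{i_n})$ and $\bar v'=(v_{j_1},\dots,v_{j_n})$ over $G$ having the same quantifier-free type in $(\struct{G},<)$ and aim to show that $\theta(\bar v)=(u_{i_1},\dots,u_{i_n})$ and $\theta(\bar v')=(u_{j_1},\dots,u_{j_n})$ have the same quantifier-free type in $\alg{U}$. The first step is to pass to the reduct: since $\struct{G}$ is a restriction of $(\struct{G},<)$, the tuples $\bar v$ and $\bar v'$ already agree on their quantifier-free type in $\struct{G}$, so they have the same pattern of equal coordinates and $(v_{i_a},v_{i_b})\in E^{\struct{G}}$ holds exactly when $(v_{j_a},v_{j_b})\in E^{\struct{G}}$ does; this is the only information about the tuples that the description \ref{it:proof:atom1}--\ref{it:proof:atom3} of the atoms of a finitely generated subalgebra uses.

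The second step is to turn this into an isomorphism. Since a finite Boolean algebra is determined by its set of atoms, it suffices to produce a bijection between the atoms of $\langle\{u_{i_1},\dots,u_{i_n}\}\rangle$ and those of $\langle\{u_{j_1},\dots,u_{j_n}\}\rangle$; the list \ref{it:proof:atom1}--\ref{it:proof:atom3} supplies one, matching the atom \ref{it:proof:atom1}, each atom $\tilde u_{i_a}$ of type \ref{it:proof:atom2}, and each atom $b_{i_a}^{i_b}$ of type \ref{it:proof:atom3} to the corresponding atom on the primed side---a correspondence which is well defined and bijective exactly because the two tuples carry the same edge and equality data. This bijection extends uniquely to a Boolean-algebra isomorphism $g$ of the two subalgebras, and since each $u_{i_a}$ is the join of precisely those listed atoms lying below it (again a purely combinatorial condition on the edges), $g$ sends $u_{i_a}$ to $u_{j_a}$ for every $a$; that is, $g$ extends the coordinatewise map $\theta(\bar v)\mapsto\theta(\bar v')$. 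By the correspondence between equality of quantifier-free types and isomorphisms of generated substructures recorded in Section~\ref{subs:logic}, this gives $\theta(\bar v)$ and $\theta(\bar v')$ the same quantifier-free type in $\alg{U}$, which is the assertion. In fact the argument yields the stronger statement that $\theta$ respects quantifier-free types already from $\struct{G}$ rather than just from $(\struct{G},<)$; the order hypothesis will only be needed for the order-preservation claims later.

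I do not expect a genuine obstacle here. The single point that deserves care is the well-definedness and bijectivity of the atom correspondence---that every nonzero complete meet-expression over $u_{i_1},\dots,u_{i_n}$ occurs exactly once among \ref{it:proof:atom1}--\ref{it:proof:atom3}, and that distinct entries of that list denote distinct elements of $\alg{U}$---but this is already visible in the matrix picture developed above, so the write-up of the observation itself is short.
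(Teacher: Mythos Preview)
Your argument is correct and follows essentially the same route as the paper: the paper derives the observation from the atom list \ref{it:proof:atom1}--\ref{it:proof:atom3} by noting that the map $u_{i_\ell}\mapsto u_{j_\ell}$ extends to a bijection of atoms and hence an isomorphism of the generated subalgebras, then invokes the correspondence between isomorphisms and quantifier-free types from Section~\ref{subs:logic}. Your extra remark that only the $\struct{G}$-type (not the order) is used is also made in the paper, immediately after the observation.
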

    Note that the addition of the order in the domain of $\theta$ further partitions the quantifier-free types, and thereby facilitates respecting them.
    
    \proofpg{Introducing a natural order.}
    The representation of the atoms of subalgebras generated by finitely many $u_i$ allows us to naturally order such algebras. 
    Consider pairwise distinct vertices $v_{i_1},\dots,v_{i_k}$.
    Without loss of generality, we have $v_{i_1}<v_{i_2}<\dots<v_{i_k}$.  
    Set $\neg(\bigvee_{1\leq \ell\leq k} u_{i_\ell})$ to be the biggest atom of $\langle\{u_{i_1},\dots,u_{i_k}\}\rangle$. 
    Directly below it, we order $\Tilde{u}_{i_1}<\dots<\Tilde{u}_{i_k}$, which are defined in \ref{it:proof:atom2}.
    The smallest atoms are those of the form $b_{i_n}^{i_m}$ as defined in \ref{it:proof:atom3}. 
    They are ordered according to 
    \begin{equation*}
    b_{i_n}^{i_m}<b_{i_{n'}}^{i_{m'}}\Leftrightarrow (\min(n,m),\max(n,m))<_{lex}(\min(n',m'),\max(n',m')),     
    \end{equation*}
    where $<_{lex}$ denotes is the lexicographic ordering.
    The complete ordered list of atoms of $\langle\{u_{i_1},\dots,u_{i_k}\}\rangle$ thus is
    \begin{equation}\label{eq:order_atoms}\tag{$\atom$}
        b_{[i_1,i_2]}<b_{[i_1,i_3]}<b_{[i_1,i_4]}<\dots<b_{[i_{k-1},i_k]}<\Tilde{u}_{i_1}<\dots<\Tilde{u}_{i_k}<\neg(\bigvee_{1\leq \ell\leq k} u_{i_\ell}),
    \end{equation}
    with $b_{[i,j]}$ standing for $b_{\min(i,j)}^{\max(i,j)}$, and only those $b_{[i,j]}$ corresponding to edges appearing in this list. 
    This ordering can be extended to a natural ordering of the entire subalgebra $\langle\{u_{i_1},\dots,u_{i_k}\}\rangle$.
    From now on, all finitely generated algebras we consider are generated by elements $u_i$.
    For every such algebra, $<$ denotes the natural ordering obtained in the way we just explored.

    Note that whenever we consider subalgebras generated by elements $u_{i_1},\dots,u_{i_k}$ and $u_{j_1},\dots,u_{j_\ell}$ such that $\{u_{j_1},\dots,u_{j_\ell}\}\subseteq\{u_{i_1},\dots,u_{i_k}\}$, the atoms of $\langle\{u_{j_1},\dots,u_{j_\ell}\}\rangle$ are ordered the same way in both $\langle\{u_{j_1},\dots,u_{j_\ell}\}\rangle$ and $\langle\{u_{i_1},\dots,u_{i_k}\}\rangle$.
    Thus, Lemma~\ref{lemma:order_restriction} tells us that 
    \[(\langle\{u_{j_1},\dots,u_{j_\ell}\}\rangle,<)\leq(\langle\{u_{i_1},\dots,u_{i_k}\}\rangle,<).\]

    Denote the subalgebra of $\alg{A}$ generated by $\{u_1,\dots,u_k\}$ by $\alg{A}_k$. 
    As we just saw, $(\alg{A}_k,<)$ is a substructure of $(\alg{A}_{k+j},<)$ for any $j\in\N$.
    This allows us to define the union $(\alg{A},<)\coloneqq\bigcup_{k\in\Npos}(\alg{A}_k,<)$, a countable, linearly ordered Boolean algebra.

    Given two tuples $\bar{v}=(v_{i_1},\dots,v_{i_k})$ and $\bar{v}'=(v_{j_1},\dots,v_{j_k})$ of the same quantifier-free type in $(\struct{G},<)$, Observation~\ref{observation:qftp-respecting} and the remarks we made right before it tell us that $\theta(\bar{v})$ and $\theta(\bar{v}')$ are of the same quantifier free type in $\alg{A}\leq\alg{U}$, and that  the mapping $\theta(v_{i_\ell})\mapsto \theta(v_{j_\ell})$ canonically extends to an isomorphism of the (unordered) subalgebras generated by $\theta(\bar{v})$ and $\theta(\bar{v}')$.
    In particular, both algebras have uniform (by exchanging $\theta(v_{i_\ell})$ for $\theta(v_{j_\ell})$ in the terms generating them; see \ref{it:proof:atom1}-\ref{it:proof:atom3}) sets of atoms.
    As the order imposed on said atoms is only dependent on that of their generators (in the way shown in (\ref{eq:order_atoms})), and $\bar{v}$ and $\bar{v}'$ are ordered the same way, so are the atoms of the subalgebras generated by their respective images. 
    Therefore, the canonical extension of $\theta(v_{i_\ell})\mapsto \theta(v_{j_\ell})$ is an isomorphism of the ordered subalgebras.
    In other words, $\theta'$, the corestriction of $\theta$ to $A$ with the added linear ordering, i.e. viewed as a mapping from $(\struct{G},<)$ to $(\alg{A},<)$, is quantifier-free type-respecting. 

    \proofpg{Putting everything together.}
    The canonically ordered atomless Boolean algebra $(\alg{B},<)$ embeds  $(\alg{A}_k,<)$ for every $k\in \Npos$ and is homogeneous.
    Consequently, it also contains an isomorphic copy of $(\alg{A},<)$.
    Fix an embedding $\iota$ from $(\alg{A},<)$ to $(\alg{B},<)$. 
    Now $\phi\coloneqq\iota\circ\theta'$ is quantifier-free type-respecting and order-preserving. 
    Finally, observe that $\phi$ maps distinct vertices to points with meet $0$ if and only if they do not share an edge. 
    Together with the definition of $\psi$ at the beginning of the proof and the fact that $\phi$ and $\psi$ both preserve the respective linear orders, this means that $\psi\circ\phi$ acts as the identity on the orbits of $(\struct{G},<)$.
\end{proof}

\begin{figure}[ht]
    \centering
    \begin{tikzpicture}[main/.style = {draw, circle}] 
        \node[main] (1) {$v_1$};
        \node[main] (2) [below of=1] {$v_2$};
        \node[main] (3) [right of=1] {$v_3$};
        \node[main] (4) [right of=2] {$v_4$};
        \node[main] (5) [right of=3] {$v_5$};
        \node[main] (5) [right of=3] {$v_5$};
        \node[main] (6) [right of=4] {$v_6$};

        \draw[-] (1) -- (2);
        \draw[-] (1) to[out=-45,in=135] (6);  
        \draw[-] (2) -- (3);
        \draw[-] (2) -- (4);    
        \draw[-] (3) -- (4);
        \draw[-] (4) -- (6);  
    \end{tikzpicture}
    \caption{A possible configuration of $v_1$ through $v_6$.}
    \label{fig:six_vertices}
\[
    \begin{array}{cccccccc}
    \blueone&\blueone&\blueone&\blueone&\blueone&\blueone&\blueone&\bluecdots\\
    &0&0&0&0&0&0&\cdots\\
    &&0&0&0&0&0&\cdots\\
    &&&0&0&0&0&\cdots\\
    &&&&0&0&0&\cdots\\
    &&&&&0&0&\cdots\\
    &&&&&&0&\cdots\\
    &&&&&&&\ddots
  \end{array}
  \;
    \begin{array}{cccccccc}
    0&\blueone&0&0&0&0&0&\cdots\\
    &\blueone&\blueone&\blueone&\blueone&\blueone&\blueone&\bluecdots\\
    &&0&0&0&0&0&\cdots\\
    &&&0&0&0&0&\cdots\\
    &&&&0&0&0&\cdots\\
    &&&&&0&0&\cdots\\
    &&&&&&0&\cdots\\
    &&&&&&&\ddots
  \end{array}
  \;
  \begin{array}{cccccccc}
    0&0&0&0&0&0&0&\cdots\\
    &0&\blueone&0&0&0&0&\cdots\\
    &&\blueone&\blueone&\blueone&\blueone&\blueone&\bluecdots\\
    &&&0&0&0&0&\cdots\\
    &&&&0&0&0&\cdots\\
    &&&&&0&0&\cdots\\
    &&&&&&0&\cdots\\
    &&&&&&&\ddots
  \end{array}
  \]
  \caption{The tuples $u_1$, $u_2$ and $u_3$ in correspondence to the vertices in Figure~\ref{fig:six_vertices}.}\label{fig:generators_and_atoms_1}

    \centering
\[
   \begin{array}{cccccccc}
    0&0&0&0&0&0&0&\cdots\\
    &0&0&\blueone&0&0&0&\cdots\\
    &&0&\blueone&0&0&0&\cdots\\
    &&&\blueone&\blueone&\blueone&\blueone&\bluecdots\\
    &&&&0&0&0&\cdots\\
    &&&&&0&0&\cdots\\
    &&&&&&0&\cdots\\
    &&&&&&&\ddots
  \end{array}
  \;
  \begin{array}{cccccccc}
    0&0&0&0&0&0&0&\cdots\\
    &0&0&0&0&0&0&\cdots\\
    &&0&0&0&0&0&\cdots\\
    &&&0&0&0&0&\cdots\\
    &&&&\blueone&\blueone&\blueone&\bluecdots\\
    &&&&&0&0&\cdots\\
    &&&&&&0&\cdots\\
    &&&&&&&\ddots
  \end{array}
  \;
  \begin{array}{cccccccc}
    0&0&0&0&0&\blueone&0&\cdots\\
    &0&0&0&0&0&0&\cdots\\
    &&0&0&0&0&0&\cdots\\
    &&&0&0&\blueone&0&\cdots\\
    &&&&0&0&0&\cdots\\
    &&&&&\blueone&\blueone&\bluecdots\\
    &&&&&&0&\cdots\\
    &&&&&&&\ddots
  \end{array}
\]
\caption{The tuples $u_4$, $u_5$ and $u_6$ in correspondence to the vertices in Figure~\ref{fig:six_vertices}.}\label{fig:generators_and_atoms_2}
\[
    \begin{array}{cccccccc}
    0&0&0&0&0&0&0&\cdots\\
    &0&0&0&0&0&0&\cdots\\
    &&\blueone&0&\blueone&\blueone&\blueone&\bluecdots\\
    &&&0&0&0&0&\cdots\\
    &&&&\blueone&\blueone&\blueone&\bluecdots\\
    &&&&&\blueone&\blueone&\bluecdots\\
    &&&&&&\blueone&\bluecdots\\
    &&&&&&&\blueddots
   \end{array}
   \;
   \begin{array}{cccccccc}
    0&0&0&0&0&0&0&\cdots\\
    &\blueone&\blueone&0&\blueone&\blueone&\blueone&\bluecdots\\
    &&0&0&0&0&0&\cdots\\
    &&&0&0&0&0&\cdots\\
    &&&&0&0&0&\cdots\\
    &&&&&0&0&\cdots\\
    &&&&&&0&\cdots\\
    &&&&&&&\ddots
  \end{array}
\;
  \begin{array}{cccccccc}
    0&0&0&0&0&0&0&\cdots\\
    &0&0&\blueone&0&0&0&\cdots\\
    &&0&0&0&0&0&\cdots\\
    &&&0&0&0&0&\cdots\\
    &&&&0&0&0&\cdots\\
    &&&&&0&0&\cdots\\
    &&&&&&0&\cdots\\
    &&&&&&&\ddots
  \end{array}
\]
\caption{Exemplary atoms of $\langle\{u_1,u_2,u_4\}\rangle$ in correspondence to the vertices in Figure~\ref{fig:six_vertices}: $\neg(\bigvee_{i=1,2,4} u_{i})$, $u_{2}\wedge\neg(\bigvee_{i=1,4} u_{i}\wedge u_{2})$ and $u_{2}\wedge u_{4}$.}\label{fig:generators_and_atoms_3}
\end{figure}

\begin{remark}
    The natural ordering of the finitely generated subalgebras $\alg{A}_k$ can also be achieved in a different manner. 
    As it offers some further insight, we sketch it here.

    Note that all entries of the elements of $A_k$ are fully determined by their first $k+1$ columns (considered as $\Npos\times \Npos$-upper block triangular matrices).
    Moreover, by cutting off all other entries, $\alg{A}_k$ naturally embeds into the Boolean algebra of all upper block triangular $(k+1)\times(k+1)$-matrices with entries $0$ and $1$, which we call $\alg{C}_{k+1}$. 
    
    Set $I_{k+1}\coloneqq\{(n,m)\in I\mid n,m\leq k+1\}$.
    The finite algebras $\alg{C}_{k+1}$ can be naturally ordered by ordering their atoms.
    Starting with the diagonal, we set $\ind{(k+1,k+1)}{I_{k+1}}$ to be the biggest atom of $\alg{C}_{k+1}$. Directly below it, we order $\left\{\ind{(1,1)}{I_{k+1}},\dots,\ind{(k,k)}{I_{k+1}}\right\}$ according to the ordering $<$ of $(\struct{G},<)$ restricted to $\{v_1,\dots, v_k\}$.
    Having ordered the diagonal, the remaining atoms $\ind{(i,j)}{I_{k+1}}, i\neq j$ are ordered as follows: 
     \begin{equation*}
        \ind{(i,j)}{I_{k+1}}<\ind{(k,l)}{I_{k+1}}\Leftrightarrow(\min\{v_{i,i},v_{j,j}\},\max\{v_{i,i},v_{j,j}\})<_{lex}(\min\{v_{k,k},v_{l,l}\},\max\{v_{k,k},v_{l,l}\}).
    \end{equation*}
    
    Pulling back the order from $(\alg{C}_{k+1},<)$ to $\alg{A}_k$ for all $k\in\Npos$, we obtain a natural ordering 
    $(\alg{A}_k,<)$ of the algebras $\alg{A}_k$. 
    In fact, this is the same ordering we imposed directly in the proof of Theorem~\ref{thm:rg_semiret_aba}.
\end{remark}

\section{Conclusion}

We answered an open question of Barto\v{s}ov\'{a} and Scow~\cite{BARTOŠOVÁ_SCOW_2024} regarding the existence of a semi-retraction between the random ordered graph and the canonically ordered atomless Boolean algebra.
By~\cite[Corollary~3.7]{scow2021ramsey}, this semi-retraction witnesses the transfer of the Ramsey property between the two structures.
Some other closely related questions remain open, e.g., whether every pre-adjunction between the ages of two structures (with embeddings as morphisms) induces a semi-retraction between the structures themselves~\cite[Question~6.6 and Theorem~6.5]{BARTOŠOVÁ_SCOW_2024}. 

According to Barto\v{s}ov\'{a} and Scow~\cite[Question~4.3]{BARTOŠOVÁ_SCOW_2024} a Ramsey transfer result via a pre-adjunction between the random ordered graph and the canonically ordered atomless Boolean algebra has previously been obtained by Ma\v{s}ulovi\'{c}~\cite{masulovic_pre-adjunctions_2018} (though this fact was originally phrased in a different language).
It would be interesting to understand how exactly Ma\v{s}ulovi\'{c}'s pre-adjunction (\cite[Theorem~3.3]{masulovic_pre-adjunctions_2018}) compares to the one induced by our semi-retraction through Theorems~6.3 and~6.4 in~\cite{BARTOŠOVÁ_SCOW_2024}.

\bibliographystyle{plain}
\bibliography{REFERENCES} 

\end{document}